\theoremstyle{plain}
\newtheorem{theorem}{Theorem}[section]
\newtheorem{lemma}[theorem]{Lemma}
\newtheorem{definition}[theorem]{Definition}
\def\ep{\varepsilon}
\begin{document}

\vskip 0.5cm

\title[non-unital tracially approximated ${\rm C^*}$-algebras] {non-unital tracially approximated ${\rm C^*}$-algebras}
\author{Qingzhai Fan, Chengyu Long and Shan Zhang}

\address{Qingzhai Fan\\ Department of Mathematics\\  Shanghai Maritime University\\
Shanghai\\China
\\  201306 }
\email{qzfan@shmtu.edu.cn}
\address{Chenyu Long\\ Department of Mathematics\\  Shanghai Maritime University\\
Shanghai\\China
\\  201306 }
\email{longchengyu321@163.com}

\address{Shan Zhang\\ Department of Mathematics\\  Shanghai Maritime University\\
Shanghai\\China
\\  201306 }
\email{202031010014@stu.shmtu.edu.cn}

\thanks{{\bf Key words}  ${\rm C^*}$-algebras, tracial approximation, tracially $\mathcal{Z}$-absorbing.}

\thanks{2000 \emph{Mathematics Subject Classification\rm{:}} 46L35, 46L05, 46L80}

\begin{abstract} In this paper, we  introduce  a class of  non-unital  tracial approximation  ${\rm C^*}$-algebras.  Consider the class of  ${\rm C^*}$-algebras which   are tracially $\mathcal{Z}$-absorbing (in the  sense of Amint, Golestani, Jamali, Phillips's  simple tracially $\mathcal{Z}$-absorbing or Castillejos, Li,  Szabvo's    tracial $\mathcal{Z}$-stability). Then $A$  is  tracially $\mathcal{Z}$-absorbing for  any  simple  ${\rm C^*}$-algebra $A$ in the corresponding  class of non-unital tracial approximation ${\rm C^*}$-algebras.
\end{abstract}

\maketitle

\section{Introduction}
The Elliott program for the classification of amenable
 ${\rm C^*}$-algebras might be said to have begun with the ${\rm K}$-theoretical
 classification of $\rm AF$ algebras in \cite{E1}. A major next step was the classification of simple
 $\rm AH$ algebras without dimension growth (in the real rank zero case see \cite{E6}, and in the general case
 see \cite{GL}).
 A crucial intermediate step was Lin's axiomatization of  Elliott-Gong's decomposition theorem for  simple $\rm AH$ algebras of real rank zero (classified by Elliott-Gong in \cite{E6}) and Gong's decomposition theorem (\cite{G1}) for simple $\rm AH$ algebras (classified by Elliott-Gong-Li in \cite{GL}).  Lin introduced the concepts of   $\rm TAF$ and $\rm TAI$ (\cite{L0} and \cite{L1}). Instead of assuming inductive limit structure, Lin started with a certain abstract (tracial) approximation property.  This led eventually to the classification of  simple separable amenable ${\rm C^*}$-algebras with finite nuclear dimension in the UCT class (see \cite{EGLN1}, \cite{EZ5}, \cite{GLN1}, \cite{GLN2}, \cite{TWW1},  \cite {GL2}).

 Inspired by   Lin's  tracial interval algebras in \cite{L1},  Elliott and  Niu in \cite{EZ} considered  the natural   notion of
 tracial approximation by  other classes of ${\rm C^*}$-algebras. Let $\Omega$ be a class of unital ${\rm C^*}$-algebras. Then the class
 of simple  separable  ${\rm C^*}$-algebras which can be tracially
approximated by ${\rm C^*}$-algebras in $\Omega,$ denoted by  ${\rm TA}\Omega$,  is
defined as follows.  A  simple unital ${\rm C^*}$-algebra $A$ is said to
      belong to the class ${\rm TA}\Omega$ if,  for any
 $\ep>0,$ any finite
subset $F\subseteq A,$ and any non-zero element $a\geq 0,$  there
are a  projection $p\in A$ and a ${\rm C^*}$-subalgebra $B$ of $A$ with
$1_B=p$ and $B\in \Omega$ such that

$(1)$ $\|xp-px\|<\ep$ for all $x\in  F$,

$(2)$ $pxp\in_\ep B$ for
all $x\in  F$, and

 $(3)$ $1-p$ is Murray-von Neumann equivalent to a
projection in $\overline{aAa}$.

The question of  which  properties pass  from a class $\Omega$ to the class ${\rm TA}\Omega$ is
interesting and sometimes important. In fact,  the property of being of stable rank one, and the
property that the strict order on projections is determined by
traces,  are important in the classification theorem in \cite{NNN}.
  In  \cite{EZ}, Elliott and Niu and in \cite{EFF} and \cite{Q8},  Elliott, Fan, and Fang  showed that certain properties of  ${\rm C^*}$-algebras in a class $\Omega$ are inherited by a simple unital ${\rm C^*}$-algebra
in the class ${\rm TA}\Omega$.

Inspired by   Lin's  tracial approximation  and by Castillejos, Li,  Szabvo's    tracial $\mathcal{Z}$-stability of simple non-unital ${\rm C^*}$-algebras in \cite{CKS} and Amint, Golestani, Jamali, Phillips's  simple tracially $\mathcal{Z}$-absorbing ${\rm C^*}$-algebras in \cite{AGJC} and by  Forough,  Golestani's  ${\rm C^*}$-algebras with Property $(T_0)$ in \cite{FG}.

In this paper, we introduce a class of non-unital tracial approximation ${\rm C^*}$-algebras.
 We show that  the following  result:
 let $\Omega$ be a class of   ${\rm C^*}$-algebras which   are tracially $\mathcal{Z}$-absorbing (in the  sense of Amint, Golestani, Jamali, Phillips's  simple tracially $\mathcal{Z}$-absorbing or Castillejos, Li,  Szabvo's    tracial $\mathcal{Z}$-stability).  Then  $A$  is   tracially $\mathcal{Z}$-absorbing
  for  any  simple  ${\rm C^*}$-algebra $A$  which is  non-unital tracially
approximated by ${\rm C^*}$-algebras in $\Omega$.

\section{Preliminaries and definitions}

Let $A$ be a ${\rm C^*}$-algebra, and let ${\rm M}_n(A)$ denote the ${\rm C^*}$-algebra of  $n\times n$ matrices with entries
elements of $A$. Let ${\rm M}_{\infty}(A)$ denote the algebraic inductive  limit of the sequence  $({\rm M}_n(A),\phi_n),$
where $\phi_n:{\rm M}_n(A)\to {\rm M}_{n+1}(A)$ is the canonical embedding as the upper left-hand corner block.
 Let ${\rm M}_{\infty}(A)_+$ (respectively, ${\rm M}_{n}(A)_+$) denote
the positive elements of ${\rm M}_{\infty}(A)$ (respectively, ${\rm M}_{n}(A)$).  Given $a, ~b\in {\rm M}_{\infty}(A)_+,$
one says  that $a$ is Cuntz subequivalent to $b$ (written $a\precsim b$) if there is a sequence $(v_n)_{n=1}^\infty$
of elements of ${\rm M}_{\infty}(A)$ such that $$\lim_{n\to \infty}\|v_nbv_n^*-a\|=0.$$
One says that $a$ and $b$ are Cuntz equivalent (written $a\sim b$) if $a\precsim b$ and $b\precsim a$. We  shall write $\langle a\rangle$ for the equivalence class of $a$.

The object ${\rm W}(A):={\rm M}_{\infty}(A)_+/\sim$
 will be called the Cuntz semigroup of $A$. (See \cite{CEI}.)  Observe that  any $a, b\in {\rm M}_{\infty}(A)_+$
are Cuntz equivalent  to orthogonal  elements $a', b'\in {\rm M}_{\infty}(A)_+$ (i.e., $a'b'=0$),   and so ${\rm Cu}(A)$ becomes  an ordered  semigroup   when equipped with the addition operation
$$\langle a\rangle+\langle b\rangle=\langle a+ b\rangle$$
 whenever $ab=0$, and the order relation
$$\langle a\rangle\leq \langle b\rangle\Leftrightarrow a\precsim b.$$

Given $a$ in ${\rm M}_{\infty}(A)_+$ and $\varepsilon>0,$ we denote by $(a-\varepsilon)_+$ the element of ${\rm C^*}(a)$ corresponding (via the functional calculus) to the function $f(t)={\max (0, t-\varepsilon)},~~ t\in \sigma(a)$. By the functional calculus, it follows in a straightforward manner that $((a-\varepsilon_1)_+-\varepsilon_2)_+=(a-(\varepsilon_1+\varepsilon_2))_+.$

\begin{definition}(\cite{AGJC}, \cite{CKS}.)\label{def:2.7}  A simple  ${\rm C^*}$-algebra $A$ is tracially $\mathcal{Z}$-absorbing, if $A\neq {{\mathbb{C}}}$
and for any finite set $F\subseteq A,$ $\varepsilon>0,$   non-zero positive element $a,~ b\in A$,
and $n\in {{\mathbb{N}}},$ there is a  completely positive order zero  contraction $\psi: M_n\to A$, where completely positive order zero map means preserving  orthogonality, i.e.,
 $\psi(e)\psi(f)=0$ for all $e,~f\in {\rm M}_n$ with $ef=0$, such that
the following properties hold:

$(1)$ $(b^2-b\psi(1)b-\varepsilon)_+\precsim a,$ and

$(2)$ for any normalized element $x\in {\rm M}_n$ (i.e., with $\|x\|=1$) and any $y\in F$ we have $\|\psi(x)y-y\psi(x)\|<\varepsilon$.
\end{definition}

The following theorem is Theorem 4.1 of  {\cite{AGJC}}.
\begin{theorem}\label{thm:2.11}
Let $A$ be a simple tracially $\mathcal{Z}$-absorbing ${\rm C^*}$-algebra and let
$B$ be a hereditary ${\rm C^*}$-subalgebra of $A$. Then $B$ is also  tracially $\mathcal{Z}$-absorbing
\end{theorem}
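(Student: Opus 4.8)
The plan is to transfer the defining property of Definition~\ref{def:2.7} from $A$ to its hereditary subalgebra $B$ by compressing the order zero maps supplied by $A$ with a local unit of $B$. First I would record the structural preliminaries: $B$ is simple because it is a hereditary subalgebra of the simple algebra $A$, and $B\neq\mathbb{C}$ because a simple tracially $\mathcal{Z}$-absorbing algebra has no minimal projection (testing Definition~\ref{def:2.7} with $n\geq 2$ against a minimal projection yields a contradiction with comparison), so neither does $B$. Now fix the data: a finite set $F\subseteq B$, $\varepsilon>0$, nonzero positive $a,b\in B$, and $n\in\mathbb{N}$; after rescaling I may assume every element of $F\cup\{a,b\}$ is a contraction. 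Choose a positive contraction $e\in B$ acting as an approximate unit for $F\cup\{a,b\}$, i.e. $\|ey-y\|,\|ye-y\|<\delta$ for all such $y$, where $\delta>0$ is to be fixed at the end.

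Next I would apply Definition~\ref{def:2.7} inside $A$ to the enlarged finite set $F'=F\cup\{e\}$, a tolerance $\varepsilon'$ much smaller than $\varepsilon$ and $\delta$, the same positive elements $a,b$, and the same $n$. This yields a c.p.c. order zero map $\psi'\colon M_n\to A$ with $(b^2-b\psi'(1)b-\varepsilon')_+\precsim a$ in $A$ and with $\psi'$ commuting with every element of $F'$ to within $\varepsilon'$, in particular with $e$. I then set $\tilde\psi(x)=e\,\psi'(x)\,e$. Since $B$ is hereditary we have $BAB\subseteq B$, so $\tilde\psi$ maps $M_n$ into $B$; it is plainly completely positive and contractive. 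Because $e$ almost commutes with $\psi'(M_n)$, for matrix units with $e_{ij}e_{kl}=0$ one computes $\tilde\psi(e_{ij})\tilde\psi(e_{kl})=e\psi'(e_{ij})e^2\psi'(e_{kl})e\approx e^3\psi'(e_{ij})\psi'(e_{kl})e=0$, so $\tilde\psi$ is a c.p.c. map that is order zero up to an error controlled by $\varepsilon'$.

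I then convert $\tilde\psi$ into a genuine order zero map. By the stability of the order zero relation for maps out of a fixed matrix algebra, for every $n$ and $\eta>0$ there is $\varepsilon'>0$ so that any c.p.c. map $M_n\to B$ which is $\varepsilon'$-approximately order zero on matrix units lies within $\eta$ of an honest c.p.c. order zero map $\psi\colon M_n\to B$. Applying this gives $\psi$ with $\|\psi(x)-\tilde\psi(x)\|<\eta$ for normalized $x$. I expect this perturbation to be the main obstacle: it is where the argument stops being elementary manipulation of approximate units and must invoke a quantitative near-inclusion result for order zero cones, while keeping the corrected map inside $B$.

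Finally I would verify the two conditions for $\psi$. For (2), given $y\in F$ and normalized $x\in M_n$, I would estimate $\psi(x)y\approx\tilde\psi(x)y=e\psi'(x)ey\approx e\psi'(x)y\approx ey\psi'(x)\approx y\psi'(x)$ using $ey\approx y$ and $\psi'(x)y\approx y\psi'(x)$, and symmetrically $y\psi(x)\approx y\psi'(x)$, so $\|\psi(x)y-y\psi(x)\|<\varepsilon$ once $\delta,\varepsilon',\eta$ are small. For (1), since $eb\approx b\approx be$ I get $b\psi(1)b\approx b\tilde\psi(1)b=be\psi'(1)eb\approx b\psi'(1)b$, whence $b^2-b\psi(1)b$ is within a small $\kappa$ of the positive element $b^2-b\psi'(1)b$; the standard Cuntz perturbation estimate then gives $(b^2-b\psi(1)b-\varepsilon)_+\precsim(b^2-b\psi'(1)b-\varepsilon')_+\precsim a$ in $A$, provided $\kappa\leq\varepsilon-\varepsilon'$. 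As $a$ and $b^2-b\psi(1)b$ both lie in $B$ and $BAB\subseteq B$, the witnessing elements can be taken in $B$, so the subequivalence (and its matricial version) holds in $B$. Choosing $\delta,\varepsilon',\eta,\kappa$ small enough at the outset reconciles all the approximations and finishes the proof.
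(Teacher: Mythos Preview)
The paper does not supply its own proof of this theorem: it is quoted verbatim as ``Theorem 4.1 of \cite{AGJC}'' and used as a black box, so there is nothing in the present paper to compare your argument against.

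Your outline is essentially the standard route to such a result and is sound. The only step that is not pure bookkeeping is the perturbation of the compressed map $\tilde\psi(x)=e\psi'(x)e$ to a genuine c.p.c.\ order zero map $\psi\colon M_n\to B$; this is justified because the cone $C_0((0,1])\otimes M_n$ is projective (equivalently, the order zero relations for $M_n$ are stable), so an approximately order zero c.p.c.\ map from $M_n$ into $B$ is close to an exact one with range still in $B$. Two small points you might tighten: (i) order zero is a priori a condition on orthogonal \emph{positive} elements, so when you compute $\psi'(e_{ij})\psi'(e_{kl})=0$ for non-positive matrix units you are implicitly invoking the Winter--Zacharias structure theorem $\psi'(x)=h\pi(x)$; and (ii) the passage from $\precsim_A$ to $\precsim_B$ for $(b^2-b\psi(1)b-\varepsilon)_+\precsim a$ is the standard fact that Cuntz subequivalence between positive elements of a hereditary subalgebra is the same in $B$ as in $A$, which you invoke correctly but might cite explicitly. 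With those clarifications your proof goes through.
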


\begin{definition}(\cite{HO}.)\label{def:2.8} We say a unital ${\rm C^*}$-algebra $A$ is tracially $\mathcal{Z}$-absorbing, if $A\neq {{\mathbb{C}}}$
and for any finite set $F\subseteq A,$ $\varepsilon>0,$   non-zero positive element $a\in A$,
and $n\in {{\mathbb{N}}},$ there is a  completely positive order zero  contraction $\psi: M_n\to A$, where completely positive order zero map means preserving  orthogonality, i.e.,
 $\psi(e)\psi(f)=0$ for all $e,f\in {\rm M}_n$ with $ef=0$, such that
the following properties hold:

$(1)$ $1-\psi(1)\precsim a,$ and

$(2)$ for any normalized element $x\in {\rm M}_n$ (i.e., with $\|x\|=1$) and any $y\in F$ we have $\|\psi(x)y-y\psi(x)\|<\varepsilon$.
\end{definition}

Remark: By \cite{AGJC} or  \cite{CKS}, if $A$ is a unital simple  ${\rm C^*}$-algebra, we know that Definition \ref{def:2.8} is equivalent to Definition \ref{def:2.7}.

Let $\Omega$ be a class of  ${\rm C^*}$-algebras. Then the class
 of simple unital  separable  ${\rm C^*}$-algebras which can be tracially
approximated by ${\rm C^*}$-algebras in $\Omega$, denoted by  ${\rm TA}\Omega$,  is
defined as follows.

\begin{definition}(\cite{EZ}.) A  simple unital ${\rm C^*}$-algebra $A$ is said to
      belong to the class ${\rm TA}\Omega$, if for any
 $\ep>0,$ any finite
subset $F\subseteq A,$ and any non-zero element $a\geq 0,$  there
are a  projection $p\in A$, and a ${\rm C^*}$-subalgebra $B$ of $A$ with
$1_B=p$ and $B\in \Omega$, such that

$(1)$ $\|xp-px\|<\ep$ for all $x\in  F$,

$(2)$ $pxp\in_\ep B$ for
all $x\in  F$, and

 $(3)$ $1-p\precsim a$.
\end{definition}

Let $\Omega$ be a class of ${\rm C^*}$-algebras. Then the class
 of simple  separable  ${\rm C^*}$-algebras which can be strongly tracially
approximated by ${\rm C^*}$-algebras in $\Omega$, denoted by  ${\rm STA}\Omega$,  is
defined as follows.

\begin{definition}\label{def:2.9}
Let $A$ be a simple ${\rm C^*}$-algebra. $A$ will said to
      belong to the class ${\rm STA}\Omega$, if for any $\varepsilon>0$, every finite set $F\subseteq A$, and for every positive elements $a,~ y\in A_{+}$ with $a\neq 0$,  and a ${\rm C^*}$-subalgebra $B$ of $A$ and $B\in \Omega$,  and a projection $p\in B$ such that the following hold:

$(1)$ $\|xp-px\|<\ep$ for all $x\in  F$,

$(2)$ $pxp\in_\ep B$ for
all $x\in  F$, and

 $(3)$ $(y^{2}-ypy-\varepsilon)_{+}\precsim _{A} a$.
\end{definition}

The proof of the following lemma  is similar with Lemma 3.6.5 in \cite{L1}.

\begin{lemma}\label{lem:2.6}
If the class $\Omega$ is the closed  under passing to unital hereditary ${\rm C^*}$-subalgebras, then the class $\rm {STA}\Omega$ is closed  under passing to unital hereditary ${\rm C^*}$-subalgebras.
\end{lemma}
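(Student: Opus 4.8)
The plan is to exploit that a unital hereditary C*-subalgebra $C$ of $A$ has the form $C=eAe$ for the projection $e=1_C\in A$, and to run the strong tracial approximation of $A$ with $e$ adjoined to the finite set, afterwards transporting the resulting data into the corner $eAe$. Thus, given $\varepsilon>0$, a finite set $F\subseteq C$, and positive elements $a,y\in C_+$ with $a\neq 0$, I would fix a small $\delta>0$ (to be determined from $\varepsilon$, $\max_{x\in F}\|x\|$ and $\|y\|$) and apply Definition \ref{def:2.9} for $A\in{\rm STA}\Omega$ to the finite set $F\cup\{e\}$, the tolerance $\delta$, and the same elements $a,y$. This produces $B\in\Omega$ with $B\subseteq A$ and a projection $p\in B$ such that $\|xp-px\|<\delta$ for all $x\in F\cup\{e\}$, $pxp\in_\delta B$ for all $x\in F\cup\{e\}$, and $(y^2-ypy-\delta)_+\precsim_A a$.

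The core construction is to replace $p$ by a projection lying in $C$ while simultaneously producing a copy of an algebra from $\Omega$ inside $C$. Since $\|ep-pe\|<\delta$, both $epe$ and $pep$ are self-adjoint elements within $O(\delta)$ of being idempotent, and $\|epe-pep\|<2\delta$. Applying functional calculus to $epe\in eAe=C$ yields a projection $q\in C$ with $\|q-epe\|$ of order $\delta$. The more delicate point is to produce the subalgebra: because $e\notin B$, I cannot simply cut $B$ by $e$. Instead I use condition (2) for the element $e$: choosing $b_0=b_0^*\in B$ with $\|pep-b_0\|<\delta$ and compressing, $pb_0p\in pBp$ is within $\delta$ of the almost-idempotent $pep$, so functional calculus inside $pBp$ gives a projection $e_0\in pBp\subseteq B$ with $\|e_0-pep\|$ of order $\delta$. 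Then $e_0Be_0$ is a unital hereditary C*-subalgebra of $B$, hence lies in $\Omega$ by hypothesis. As $\|e_0-q\|\le\|e_0-pep\|+\|pep-epe\|+\|epe-q\|<1$ for small $\delta$, there is a unitary $u$ in the unitization with $ue_0u^*=q$ and $\|u-1\|$ of order $\delta$. Setting $B':=u(e_0Be_0)u^*\cong e_0Be_0$, we get $B'\in\Omega$; and since its unit is $q\le e$, every element of $B'=qB'q$ lies in $qAq\subseteq eAe=C$, so $B'\subseteq C$ is a subalgebra containing the projection $q$.

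It then remains to verify the three defining conditions of ${\rm STA}\Omega$ for $C$ with this $B'$ and $q$, and to choose $\delta$ small enough. For (1), using $q\approx epe$, $ex=xe=x$ (as $x\in C$) and $xp\approx px$ one gets $xq\approx px\approx qx$, so $\|xq-qx\|=O(\delta)$. For (2), the identities $e_0=pe_0=e_0p$ give $e_0xe_0=e_0(pxp)e_0\approx e_0be_0\in e_0Be_0$ whenever $\|pxp-b\|<\delta$; conjugating by the near-identity $u$ and using $qxq\approx ue_0xe_0u^*$ shows $qxq\in_{O(\delta)}B'$. For (3), $eye=y$ gives $\|yqy-ypy\|=O(\delta)$, so a standard perturbation estimate for the functions $(\,\cdot\,-s)_+$ yields $(y^2-yqy-\varepsilon)_+\precsim_A(y^2-ypy-\delta)_+\precsim_A a$ once $\varepsilon$ exceeds $\delta$ plus the error. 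Finally I would upgrade $\precsim_A$ to $\precsim_C$: since $(y^2-yqy-\varepsilon)_+$ and $a$ both lie in the unital hereditary subalgebra $C=eAe$, Rørdam's characterization of Cuntz subequivalence lets me rewrite any approximate factorization $(\,\cdot\,-\varepsilon)_+=rar^*$ as $(ere)a(ere)^*$ with $ere\in C$, giving $\precsim_C$. The main obstacle throughout is this transport step: $e$ does not belong to $B$, so one must pull a projection $e_0$ into $B$ via condition (2), move the resulting corner into $C$ by a unitary close to $1$, and control every perturbation so that the accumulated errors stay below $\varepsilon$; the relativization of Cuntz comparison to $C$ is the other point requiring care.
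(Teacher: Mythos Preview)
Your approach mirrors the paper's closely: both recognize the unital hereditary subalgebra as a corner $eAe$ (the paper writes $qAq$), apply the ${\rm STA}\Omega$ hypothesis to $A$ with the corner's unit adjoined to the finite set, perturb $epe$ to a genuine projection in the corner, locate a nearby projection inside $B$ via condition~(2), and conjugate the resulting unital hereditary corner of $B$ back into $eAe$ by a unitary close to $1$. Your verifications of conditions (1) and (2) follow the same standard estimates as the paper's.

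The one substantive difference is your treatment of condition~(3). The paper argues (in its notation) that $q-p\precsim q(1-p')q\precsim 1-p'\precsim a$; but the step $1-p'\precsim a$ is the ${\rm TA}\Omega$ smallness condition, not what the ${\rm STA}\Omega$ hypothesis on $A$ actually delivers (it gives only $(y^{2}-yp'y-\delta)_{+}\precsim a$), and indeed $A$ need not even be unital. Your route avoids this: since $y\in eAe$ and your new projection is close to $epe$, you have $\|y\,q\,y-y\,p\,y\|=O(\delta)$, and a standard perturbation estimate for $(\,\cdot\,-s)_+$ then gives $(y^2-yqy-\varepsilon)_+\precsim (y^2-ypy-\delta)_+\precsim a$. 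This is the correct argument; the paper's appears to be transplanted from Lin's Lemma~3.6.5 (the ${\rm TA}\Omega$ setting) without the needed adjustment. You also add the relativization of Cuntz subequivalence from $A$ to the hereditary corner, a point the paper passes over.
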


\begin{proof}  We show that  $B=qAq \in \rm {STA}\Omega$ for $C^{*}$-algebra $A\in \rm {STA}\Omega$ and  some projection $q\in A$.

Let ${F}\subset B$ be a finite subset, we may assume that  $F$ is a subset of the unit ball of $B$, any $\varepsilon>0$,   and for every positive elements $a,~ y\in A_{+}$ with $a\neq 0$,  we  will show that there exist a  ${\rm C^*}$-subalgebra $D$ of $B$ and $D\in \Omega$,  and a projection $p\in D$ such that the following hold:

$(1)$ $\|xp-px\|<\ep$ for all $x\in  F$,

$(2)$ $pxp\in_\ep D$ for
all $x\in  F$, and

 $(3)$ $(y^{2}-yp'y-\varepsilon)_{+}\precsim a$.

 Since $A\in \rm {STA}\Omega$, for ${G}={F}\cup \{q\}$,  $\delta>0$,  and  positive elements $a, y\in A_{+}$ with $a\neq 0$, there are a projection $p\in A$ and a $C^{*}$-subalgebra $C$ of $A$ and $C\in \Omega$ such that

$(1)'$ $\parallel p'x-xp'\parallel<\delta$ for all $x\in {G},$

$(2)'$ $p'xp'\in_{\delta}C$ for all $x\in {G}$ and

$(3)'$ $(y^{2}-yp'y-\delta)_{+}\precsim a$.

By $(1)'$ and $(2)'$, one has  $$\|(qp'q)^{2}-qp'q\|<2\delta,$$

With small $\delta$, we obtain a projection $e\in C$, a projection $p\in qAq = B$ and a unitary $u\in A$ such that
$$\| p-qp'q\| <\frac{\varepsilon}{3},\| p-e\| <\frac{\varepsilon}{3},u^{*}eu=p~and~\| u-1\| <\frac{\varepsilon}{3}.$$
Let $C_{1} = u^{*}eCeu$. Since  $eCe\in \Omega$, one has $C_{1}\in \Omega$.

For each $x\in {F}$
$$\| px-xp\|\leq \| qp'q-p\|\| x\|+\|qp'qx-xqp'q\|+\|x\|\|qp'q-p\|< \frac{\varepsilon}{3}+\delta+\frac{\varepsilon}{3}<\varepsilon.$$
For any  $x\in {F}$, there is a $c_{x}\in C$ such that
$$\| p'qxqp'-c_{x}\|< 3\delta.$$
Thus,

$$pxp\approx_{6\delta}pp'qxqp'p\approx_{6\delta}pc_xp\approx_{\frac{\varepsilon}{3}}
ec_{x}e\approx_{\varepsilon}u^*ec_{x}eu.$$
Therefore we have
$$pxp\in_{2\varepsilon}C_{1}~for~all~x\in{F}$$
Since $\|(q-qp'q)-(q-p)\|< 1,$~ if $\delta$ is sufficiently small, $(q-p)(q-qp'q)(q-p)$ is invertible in $(q-p)A(q-p).$
Hence
$$q-p\sim (q-p)(q-qp'q)(q-p)=(q-p)q(1-p')q(q-p) \precsim (q-p)(1-p')(q-p)\precsim q(1-p')q.$$
Also, it follows that $q(1-p')q\precsim 1-p'.$ Finally, we have
$$q-p\precsim q(1-p')q \precsim 1-p'\precsim a.$$
Thus $B \in \rm{STA}\Omega$.
\end{proof}

\begin{theorem}\label{thm:2.10}
Let $\Omega$  be a class  of ${\rm C^*}$-algebras such that  $\Omega$ is the closed  under passing to unital hereditary ${\rm C^*}$-subalgebras. Let $A$ be a unital simple ${\rm C^*}$-algebra, the following statements are equivalent:

$(I)$ $A$  belong to the class of $\rm {TA}\Omega$ in the sense of definition $2.5.$

$(II)$ $A$ belong to  the class of  $\rm {STA}\Omega$  in the sense of definition$2.9.$
\end{theorem}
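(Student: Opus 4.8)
The plan is to establish the two implications separately. The crucial observation is that conditions $(1)$ and $(2)$ in the definition of $\mathrm{TA}\Omega$ and in Definition~\ref{def:2.9} are word-for-word identical, so the entire content of the equivalence lies in passing between the two forms of condition $(3)$: the relation $1-p\precsim a$ on the one hand, and $(y^2-ypy-\varepsilon)_+\precsim_A a$ on the other. The bridge is the elementary identity $y^2-ypy=y(1-p)y$, which holds precisely because $A$ is unital, together with two standard Cuntz facts: $0\le c\le d$ implies $c\precsim d$, and $c^*c\sim cc^*$.

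For $(I)\Rightarrow(II)$, given $\varepsilon>0$, a finite set $F$, and positive elements $a,y$ with $a\neq 0$, I would invoke $A\in\mathrm{TA}\Omega$ with the data $(\varepsilon,F,a)$. This furnishes a projection $p$ and a subalgebra $B\in\Omega$ with $1_B=p$ satisfying $(1)$ and $(2)$ verbatim; since $p=1_B\in B$, the projection required by Definition~\ref{def:2.9} is already present, and no control over $y$ is needed. It then remains only to check $(3)$. Using the identity above and $(b-\varepsilon)_+\precsim b$, I would write $(y^2-ypy-\varepsilon)_+=(y(1-p)y-\varepsilon)_+\precsim y(1-p)y$. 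Putting $c=(1-p)y$ gives $y(1-p)y=c^*c\sim cc^*=(1-p)y^2(1-p)$, and from $(1-p)y^2(1-p)\le\|y\|^2(1-p)\sim 1-p$ one obtains $(1-p)y^2(1-p)\precsim 1-p$. Chaining these with $1-p\precsim a$, which is exactly condition $(3)$ of $\mathrm{TA}\Omega$, yields $(y^2-ypy-\varepsilon)_+\precsim_A a$, as required.

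For $(II)\Rightarrow(I)$, fix $(\varepsilon,F,a)$; we may assume $\varepsilon<1$, since the $\mathrm{TA}\Omega$ conditions for larger tolerance follow from those for smaller ones. I would apply $A\in\mathrm{STA}\Omega$ to the data $(\varepsilon,F,a)$ with the special choice $y=1$. This produces $B\in\Omega$ and a projection $p\in B$ satisfying $(1)$, $(2)$, and $(1-p-\varepsilon)_+\precsim_A a$. Because $1-p$ is a projection and $\varepsilon<1$, we have $(1-p-\varepsilon)_+=(1-\varepsilon)(1-p)\sim 1-p$, whence $1-p\precsim a$, which is condition $(3)$ of $\mathrm{TA}\Omega$. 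The only remaining discrepancy is that Definition~\ref{def:2.9} merely guarantees $p\in B$, while $\mathrm{TA}\Omega$ demands a subalgebra whose unit is $p$. I would remove this by passing to the corner $B':=pBp$, a unital hereditary $C^*$-subalgebra of $B$ with unit $p$; by the closure hypothesis on $\Omega$ we get $B'\in\Omega$, and compressing $pxp\in_\varepsilon B$ by $p$ gives $pxp\in_\varepsilon B'$. With $(p,B')$ in place of $(p,B)$, all three conditions of the definition of $\mathrm{TA}\Omega$ hold.

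The step I expect to be the main, if modest, obstacle is this last transition in $(II)\Rightarrow(I)$ from an ambient subalgebra $B$ containing $p$ to one having $p$ as its unit: it is exactly here that the hypothesis that $\Omega$ is closed under unital hereditary $C^*$-subalgebras is used, and one must verify that compression by $p$ preserves the approximate-containment condition $(2)$. The Cuntz-semigroup computations in $(I)\Rightarrow(II)$ are routine once the identity $y^2-ypy=y(1-p)y$ is in hand, but I would take care that every subequivalence is read in $A$ (equivalently in $\mathrm{M}_\infty(A)$), so that the concluding relation is genuinely $\precsim_A a$.
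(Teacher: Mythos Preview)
Your proposal is correct and follows essentially the same route as the paper: in both directions you specialize the extra parameter ($y=1$ for $(II)\Rightarrow(I)$), use the identity $y^2-ypy=y(1-p)y$ for $(I)\Rightarrow(II)$, and replace $B$ by the corner $pBp$ via the hereditary-closure hypothesis. Your write-up is in fact slightly more careful than the paper's, which asserts $(1-p-\varepsilon)_+=1-p$ where the correct equality is $(1-p-\varepsilon)_+=(1-\varepsilon)(1-p)$, as you note.
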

\begin{proof}
 $(II)\Rightarrow (I)$, we need to show that
for any
 $\ep>0,$ any finite
subset $F\subseteq A,$ and any non-zero element $a\geq 0,$  there
are a  projection $p\in A$, and a ${\rm C^*}$-subalgebra $D$ of $A$ with
$1_D=p$ and $D\in \Omega$, such that

$(1)$ $\|xp-px\|<\ep$ for all $x\in  F$,

$(2)$ $pxp\in_\ep D$ for
all $x\in  F$, and

 $(3)$ $1-p\precsim a$.

Since $A\in \rm {STA}\Omega$, for given  $\varepsilon>0$,  finite set $F\subseteq A$, and for every positive elements $a, 1\in A_{+}$ with $a\neq 0$,  there exist a a ${\rm C^*}$-subalgebra $B$ of $A$ and $B\in \Omega$,  and a projection $p\in B$ such that the following hold:

$(1)'$ $\|xp-px\|<\ep$ for all $x\in  F$,

$(2)'$ $pxp\in_\ep B$ for
all $x\in  F$, and

 $(3)'$ $(1-p-\varepsilon)_{+}=1-p\precsim a$.

 We take $D=pBp$, then $1_D=p$, and $D\in \Omega$, such that

 $(1)$ $\|xp-px\|<\ep$ for all $x\in  F$,

$(2)$ $pxp\in_\ep D$ for
all $x\in  F$, and

 $(3)$ $1-p\precsim a$.

 $(I)\Rightarrow (II)$,we need to show that
 for any $\varepsilon>0$, every finite set $F\subseteq A$, and for every positive elements $a,~ y\in A_{+}$ with $a\neq 0$,  and a ${\rm C^*}$-subalgebra $B$ of $A$ and $B\in \Omega$,  and a projection $p\in B$ such that the following hold:

$(1)$ $\|xp-px\|<\ep$ for all $x\in  F$,

$(2)$ $pxp\in_\ep B$ for
all $x\in  F$, and

 $(3)$ $(y^{2}-ypy-\varepsilon)_{+}\precsim _{A} a$.

 Since $A\in \rm {TA}\Omega$, for given  $\varepsilon>0$,
 and elements  $y,~ a\in A_+$ and $a\geq 0,$   there
are a  projection $p\in A$, and a ${\rm C^*}$-subalgebra $D$ of $A$ with
$1_D=p$ and $D\in \Omega$, such that

$(1)'$ $\|xp-px\|<\ep$ for all $x\in  F$,

$(2)'$ $pxp\in_\ep D$ for
all $x\in  F$, and

 $(3)'$ $1-p\precsim a$.

By $(3)'$ we have

$(3)$ $(y^2-ypy-\varepsilon)_+\precsim y(1-p)y\precsim 1-p\precsim a$.

\end{proof}

\section{The main results}

\begin{theorem}\label{thm:3.1}
 Let $\Omega$ be a class of ${\rm C^*}$-algebras which are  tracially $\mathcal{Z}$-absorbing. Then $A$ is  tracially $\mathcal{Z}$-absorbing  for  any  infinite-dimensional simple ${\rm C^*}$-algebra $A\in {\rm STA}\Omega$.
\end{theorem}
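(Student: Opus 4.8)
The plan is to feed the completely positive order zero maps supplied by the tracial $\mathcal{Z}$-absorption of the approximating subalgebra through the strong tracial approximation, controlling the two resulting defects separately by Cuntz comparison in $A$. Fix $F$ (in the unit ball), $\varepsilon>0$, nonzero positive $a,b\in A$ and $n$. Using that $A$ is infinite-dimensional and simple, I first split $a$ into two orthogonal nonzero pieces $a_1,a_2\in\overline{aAa}_+$ with $a_1\oplus a_2\precsim a$, chosen so that $a_2$ is not absorbed by $a_1$. I then apply Definition \ref{def:2.9} to the finite set $F\cup\{b,a_2\}$ with tolerance $\delta\le\varepsilon/2$, element $y=b$ and target $a_1$, obtaining $B\in\Omega$ and a projection $p\in B$ with $\|xp-px\|<\delta$ on $F\cup\{b,a_2\}$, with $pxp\in_\delta B$, and with $(b^2-bpb-\delta)_+\precsim a_1$.

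The corner $pBp$ is a unital (with unit $p$) simple hereditary ${\rm C^*}$-subalgebra of $B$, so by Theorem \ref{thm:2.11} (applied to the simple algebra $B$) it is again tracially $\mathcal{Z}$-absorbing, and being unital and simple it satisfies Definition \ref{def:2.8} by the Remark after that definition. I apply this to the set $\{pb_xp:x\in F\}\subseteq pBp$ (where $b_x\in B$ approximates $pxp$ to within $\delta$), the same $n$, and a target $a_B\in(pBp)_+\setminus\{0\}$ to be chosen below, obtaining a completely positive order zero contraction $\psi_0:M_n\to pBp$ with $p-\psi_0(1)\precsim a_B$ and with $\psi_0$ commuting with the $pb_xp$ up to $\delta$. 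Composing with the inclusion $pBp\hookrightarrow A$ gives a completely positive order zero contraction $\psi:M_n\to A$ with $\psi(1)\le p$.

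For condition (1) I use the decomposition $b^2-b\psi(1)b=(b^2-bpb)+b(p-\psi(1))b$ into a sum of positive elements. A standard Cuntz subadditivity estimate gives $(b^2-b\psi(1)b-\varepsilon)_+\precsim(b^2-bpb-\varepsilon/2)_+\oplus(b(p-\psi(1))b-\varepsilon/2)_+$. The first summand is $\precsim a_1$ by the choice of $p$. For the second, $b(p-\psi(1))b\precsim p-\psi(1)\precsim a_B$, so it remains only to arrange $a_B\precsim a_2$; granting this, both summands are dominated by $a_1\oplus a_2\precsim a$, giving condition (1). Condition (2) is the routine part: since $\psi(x')=p\psi(x')p$ lies under $p$ and the off-diagonal corners of each $y\in F$ relative to $p$ have norm $<\delta$ (because $\|yp-py\|<\delta$), one gets $\psi(x')y\approx\psi(x')(pyp)$ and $y\psi(x')\approx(pyp)\psi(x')$ up to $2\delta$, and combining with the approximate commutation of $\psi_0$ with the approximants $pb_yp$ of $pyp$ yields $\|\psi(x')y-y\psi(x')\|<\varepsilon$ for normalized $x'\in M_n$.

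The main obstacle is producing the target $a_B\in(pBp)_+\setminus\{0\}$ with $a_B\precsim_A a_2$: since $pBp$ is \emph{not} hereditary in $A$, an element of $A$ below $a_2$ need not lie in $pBp$, and there is no strict-comparison hypothesis to fall back on. The idea is to manufacture $a_B$ from $a_2$ itself. Having placed $a_2$ in the finite set, $pa_2p$ is approximated to within $\delta$ by some $pb_{a_2}p\in pBp$, and $a_B:=(pb_{a_2}p-\delta)_+\in pBp$ satisfies $a_B\precsim pa_2p\precsim a_2$ by Rørdam's lemma together with $pa_2p=(a_2^{1/2}p)^*(a_2^{1/2}p)\sim a_2^{1/2}pa_2^{1/2}\le a_2$. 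The delicate point, where the choice of the splitting $a=a_1\oplus a_2$ enters, is to guarantee that this compression does not vanish: because $1-p$ is Cuntz-small (controlled by $a_1$) while $a_2$ is chosen not to be absorbed by $a_1$, the element $a_2$ cannot be pushed into the complement of $p$, forcing $\|pa_2p\|>2\delta$ and hence $a_B\neq 0$. Making this nonvanishing quantitative and uniform in the approximation parameters is the crux; the remaining ingredients — the order zero transfer along the inclusion, the functional calculus identities, and the Cuntz estimates cited above — are routine.
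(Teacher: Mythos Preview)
Your overall architecture is exactly the paper's: split $a$ into two orthogonal nonzero pieces, apply Definition~\ref{def:2.9} with $y=b$ and one piece as target to get $B$ and $p$, pass to the unital simple corner $pBp$ (tracially $\mathcal{Z}$-absorbing by Theorem~\ref{thm:2.11} and the Remark), run Definition~\ref{def:2.8} there against a target built from the compression of the other piece, and then add the two Cuntz estimates. The commutator estimate (your condition (2)) and the Cuntz subadditivity step are handled the same way in both arguments.

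Where you go beyond the paper is in isolating the one genuine subtlety: that the target $a_B=(pb_{a_2}p-\delta)_+$ in $pBp$ must be \emph{nonzero}. The paper simply writes down its analogue $\bar a'$ and uses it without ever checking $\bar a'\neq 0$ (or even that it is positive and lies in $pBp$ rather than just in $B$), so on this point you are more careful, not less. However, your sketch of why $a_B\neq 0$ does not work as stated. You write that ``$1-p$ is Cuntz-small (controlled by $a_1$)'', but Definition~\ref{def:2.9} with $y=b$ only yields $(b^2-bpb-\delta)_+\precsim a_1$; it gives no control whatsoever over $1-p$ (which, in the non-unital case the definition is designed for, is not even an element of $A$). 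Consequently the implication ``$a_2\not\precsim a_1\Rightarrow \|pa_2p\|>2\delta$'' is unsupported: nothing you have arranged prevents $a_2$ from being (nearly) orthogonal to $p$. Put differently, Definition~\ref{def:2.9} lets you choose only one $y$, and you have spent it on $b$; you would need a second instance with $y$ related to $a_2$ (or a lemma upgrading Definition~\ref{def:2.9} to finitely many $y$'s simultaneously) to force $\|pa_2p\|$ to be bounded below. So your identification of ``the crux'' is exactly right, but the heuristic you offer does not close it; the paper's proof has the same gap, just unacknowledged.
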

\begin{proof}
We need to show that
for any finite set $F=\{a_1,~a_2,~\cdots,~ a_k\}\subseteq A,$  any $\varepsilon>0,$  any non-zero positive elements $a, b \in A$
and $n\in {{{\mathbb{N}}}},$  there is an order zero contraction $\psi:{\rm M}_n\to A$ such that
the following conditions hold:

$(1)$ $(b^2-b\psi(1)b-\varepsilon)_+\precsim a$, and

$(2)$ for any normalized element $x\in {\rm M}_n$ and any $y\in F,$ we have
$\|\psi(x)y-y\psi(x)\|<\varepsilon.$

Since $A$ is  an  infinite-dimensional simple ${\rm C^*}$-algebra, there exist non-zero positive elements $\bar{a}, \bar{\bar{{a}}} \in A_+$, such that $\bar{a}\sim \bar{\bar{{a}}}, \bar{a}\bar{\bar{{a}}}=0$ and $\bar{a}+\bar{\bar{{a}}}\precsim a$.

Since $A\in {\rm STA}\Omega,$ for $F=\{a_1,~a_2,~\cdots, ~a_k\}$, any $\varepsilon'>0,$  non-zero positive elements $\bar{a},b \in A$,
there  are   a ${\rm C^*}$-subalgebra $B$ of $A$ and a projection $p\in B$  with   $B\in \Omega$  such that

$(1)'$ $\|a_ip-pa_i\|<\ep',$  for all $1\leq i\leq k,$  $\|bp-pb\|<\ep',~ \|\bar{a}p-p\bar{a}\|<\ep',$

$(2)'$ $pa_ip\in_{\ep'} B,$  for all $1\leq i\leq k,$  $pbp\in_{\ep'} B,~ p\bar{a}p\in_{\ep'} B$ and

$(3)'$ $(b^2-bpb-\varepsilon')_+\precsim \bar{a}.$

When is $\varepsilon'$ small enough, by $(1)'$ and $(2)'$,   there exist $\bar{a}', ~a_1',~ a_2',~\cdots,$ $ ~a_k', ~b'\in B$  such that
$$\|b'-pb'p\|<4\varepsilon', \|\bar{a}'-p\bar{a}p\|<4\varepsilon', \|pa_ip-a_i'\|<4\varepsilon',$$

 Since $B\in \Omega,$ by Lemma \ref{lem:2.6} and by Theorem \ref{thm:2.11}, one has $pBp\in \Omega$,   for  $G=\{a_1',~a_2',~\cdots, ~a_k'\}\subseteq B,$  $\varepsilon'>0$ as specified,  the non-zero positive element $\bar{a}'\in B$,
and $n\in {{\mathbb{N}}},$  there  is  an order zero contraction $\psi: {\rm M}_n\to pBp\subseteq A$  with
the following properties:

$(1)''$ $p-\psi(1)\precsim \bar{a}',$ and

$(2)''$ for any normalized element $x\in {\rm M}_n$ and any $a_i'\in G,$  we have $$\|\psi(x)a_i'-a_i'\psi(x)\|<\varepsilon'.$$

By $(1)'$, $$\|a_i-pa_ip-(1-p)a_i(1-p)\|<\varepsilon'.$$

 Therefore, for any normalized element $x\in {\rm M}_n$,
we have
\begin{eqnarray}
\label{Eq:eq1}
  && (2)~~~ \|\psi(x)a_i-a_i\psi(x)\|\nonumber\\
  &&\leq\|\psi(x)a_i-\psi(x)(pa_ip+(1-p)a_i(1-p))\|\nonumber\\
&&+\|\psi(x)(pa_ip+(1-p)a_i(1-p))
-\psi(x)(a_i'+(1-p)a_i(1-p))\|\nonumber\\
&&+ \|\psi(x)(a_i'+(1-p)a_i(1-p))-(a_i'+(1-p)a_i(1-p))\psi(x)\|\nonumber\\
 &&+ \|(a_i'+(1-p)a_i(1-p))\psi(x)-(pa_ip+(1-p)a_i(1-p))\psi(x)\|\nonumber\\
 &&+\|(pa_ip+(1-p)a_i(1-p))\psi(x)
-a_i\psi(x)\|\nonumber\\
&&<2\varepsilon'+\varepsilon'+2\varepsilon'+\varepsilon'+3\varepsilon'<13\varepsilon'<\varepsilon.\nonumber
  \end{eqnarray}

Since $p-\psi(1)\precsim \bar{a}',$ one has $b(p-\psi(1))b\precsim \bar{a}'$, therefore, one has

$(1)$ $(b^2-b\psi(1)b-2\varepsilon)_+\precsim (b^2-bpb-\varepsilon')_+ \oplus(bpb-b\psi(1)b)\precsim \bar{\bar{a}}\oplus \bar{{a}}'\precsim \bar{\bar{a}}+\bar{a}\precsim a.$

\end{proof}

 \end{document}